\documentclass[onecolumn]{IEEEtran}
\usepackage{amsthm}
\usepackage{amsmath}
\usepackage{amssymb}
\usepackage{graphicx}
\usepackage[unicode=true]{hyperref}
\usepackage{epstopdf}
\usepackage{algorithm}
\usepackage{algorithmic,color}

\newtheorem{theorem}{Theorem}


\newcommand{\bff}{{\bf f}}

\newcommand{\bv}{{\bf v}}
\newcommand{\fot}{{\frac{1}{2}}}

\begin{document}

\title{Vessel Segmentation in Medical Imaging Using a Tight-Frame Based Algorithm}
\author{Xiaohao Cai, Raymond Chan,\thanks{Department of Mathematics, The Chinese University of Hong Kong,
Shatin, Hong Kong, China (xhcai@math.cuhk.edu.hk and rchan@math.cuhk.edu.hk).}
Serena Morigi, Fiorella Sgallari\thanks{Department of Mathematics-CIRAM, University of Bologna, Bologna, Italy  (morigi@dm.unibo.it and sgallari@dm.unibo.it).}
 }
\date{}
\maketitle

\begin{abstract}
Tight-frame, a generalization of orthogonal wavelets, has been used
successfully in various problems in image processing, including
inpainting, impulse noise removal, super-resolution image restoration,
etc. Segmentation is the process of identifying object outlines
within images. There are quite a few efficient algorithms for segmentation
that depend on the variational approach and
the partial differential equation (PDE) modeling.
 In this paper, we propose to apply the
tight-frame approach to automatically identify tube-like structures
such as blood vessels in Magnetic Resonance Angiography (MRA) images.
Our method iteratively refines a region that encloses the possible
boundary or surface of the vessels. In each iteration, we apply
the tight-frame algorithm to denoise and smooth the possible
boundary and sharpen the region. We prove the convergence of our
algorithm. Numerical experiments on real 2D/3D MRA images demonstrate
that our method is very efficient with convergence usually
within a few iterations, and it outperforms existing PDE and
variational methods as it can extract more tubular objects
and fine details in the images.
\end{abstract}

\vskip1cm

\begin{IEEEkeywords}
Tight-frame, thresholding, image segmentation, wavelet transform.
\end{IEEEkeywords}


\section{Introduction}\label{sec1}

Segmentation problem of branching tubular
objects in 2D and 3D images  arises in many
applications, for examples, extracting roads in aerial photography,
and anatomical surfaces of blood vessels in
medical images. In this paper, we are concerned with identifying 
tube-like structures in Magnetic Resonance Angiography (MRA) 
images. Because of the necessity to
obtain as much fine details as possible in real time, automatic, robust
and efficient methods are needed. However, due to the low contrast,
intensity inhomogeneity, partial occlusions and intersections, and the
presence of noise and possible blur in the images, this segmentation
problem is very challenging.

There are many different approaches for vessel segmentation, see
for example \cite{CV,DCS,ESF01,GH,KM,LF,SC} and the extended reviews
\cite{CRR,CF}. In the following, we concentrate on two approaches
that are related to our approach:
the active contour approach and the partial
differential equation (PDE) approach.
For the vessel segmentation algorithms
that are based on deformable models \cite{MT}, because
the explicit deformable model representation is usually impractical,
level set techniques to evolve a deformable model have been introduced,
and they provide implicit representation of the deformable model.
However, the level set segmentation
approach is computationally more expensive as it needs to cover
the entire domain of interest, which is generally one dimension
higher than the original one. Interested readers can refer
to recent literature on the level set segmentation strategy for
tubular structures \cite{GH,HF,S}.

Based on the curve evolution techniques,
Mumford-Shah functional and level sets,
a new model for active contours to detect objects
in a given image was proposed in \cite{CV}.
Unlike the classical active contour models, this model does
 not depend on the gradient of the image.
 In \cite{BC}, a generalization of the active contour
without edges model was proposed for object detection using logic operations.
However, both the active contour model \cite{CV} and the logic framework
\cite{BC} are not suitable for detecting tubular structures with low
contrast and intensity inhomogeneity, see numerical results in Section \ref{sec5}.

In \cite{ESF01}, a geometric deformable model
for the segmentation of tubular-like structures was
proposed. The model is characterized mainly by two
components by using a suitable diffusion tensor:
the mean curvature flow and the
directionality of the tubular structures. The major
advantage of this technique is the ability to segment
twisted, convoluted and occluded structures without
user interactions; and it can follow the branching of different
layers, from thinner to larger structures. The
dependence on the grid resolution chosen to solve
the discretized PDE model is still an open problem.
The authors in \cite{ESF01} have also applied a variant
of the proposed PDE model to the challenging problem
of composed segmentation in \cite{ESF02}.

Besides the methods above,
there were some initial work by using wavelets or tight-frames
to do texture classification and segmentation \cite{AG,U}.
The tight-frame
approach is a versatile and effective tool for many different
applications in image processing, see
\cite{CSS03,JRZ,RSG,CCSS,SS,SCD,YL}.
There are many kinds of tight-frame systems, such as
those from framelets \cite{RS},  contourlets \cite{DV} and curvelets
\cite{CD,CDDL}, etc.
Recently, the authors in \cite{DCS} proposed to combine
the tight-frame image restoration model of \cite{COS}
and the total variation based segmentation model
of \cite{BEVTO,CEM,CV} to do segmentation. Their
approach results in a minimization problem. In this paper,
we also derive a segmentation algorithm that uses the
tight-frames. However our method is not based
on minimizing any variational model and hence it is
different from the method in \cite{DCS}. In fact, our algorithm
iteratively updates the set of possible boundary pixels
to change the given image into a binary image. Like the method in \cite{ESF01},
our method also has the ability to segment twisted, convoluted
and occluded structures. We will show that our algorithm is convergent.
In fact, numerical results show that it converges within 10
iterations for 2D as well as 3D Magnetic Resonance Angiography (MRA) images.
We will see that our method can
extract much more details from the given image than the
method in  \cite{CV}, \cite{DCS} and \cite{ESF01}.
We remark that a preliminary version of our segmentation algorithm
has been given in our proceeding paper \cite{CCMS}. The main 
contributions in this paper are: (1) a simpler strategy to initialize
and refine the regions enclosing the possible boundary 
of the vessels, (2) the new strategy leads to a simple proof of 
convergence and easier choice of parameters, (3) a different tight-frame
with better directionally selective property is used to 
obtain more details in the image, (4) a new 3D result with higher noise
and more complicated vessel is added.

The rest of the paper is organized as follows. In Section \ref{sec2},
we recall some basic facts about tight-frame and
tight-frame algorithms. Our segmentation algorithm is given in
Section \ref{sec3}. In Section \ref{sec5} we test our algorithm 
on various real 2D and 3D MRA images and compare it 
with those representative algorithms from different approaches:
PDE-based \cite{ESF01}, tight-frame \cite{DCS} and active contour
models \cite{CV}. Conclusions are given in Section \ref{sec6}.

\section{Tight-Frame Algorithm} \label{sec2}

In this section, we briefly introduce the tight-frame algorithm
which are based on tight-frame transforms.
All tight-frame transforms ${\cal A}$ have a very important property,
the ``{\it perfect reconstruction property}'':
${\cal A}^T{\cal A} = {\cal I}$, the identity transform, see \cite{RS}.
Unlike the wavelets, in general, ${\cal A}{\cal A}^T \not = {\cal I}$.
For theories of framelets and tight-frame transforms, we refer the
readers to \cite{DI} for more details. In order to apply the tight-frame
algorithm, one only needs to know the filters corresponding to the framelets
in the tight-frame. In the followings, we give two examples of
tight-frames: the piecewise linear B-spline tight-frame \cite{DHRS} and
the dual-tree complex wavelet tight-frame \cite{SBK}.

The filters in the piecewise linear B-spline tight-frame are:
\begin{equation} \label{filter}
h_0 = \frac{1}{4} [1,\  2,\ 1], \quad h_1 = \frac{\sqrt{2}}{4} [1,\  0,\ -1],
\quad  h_2 = \frac{1}{4} [-1,\  2,\ -1],
\end{equation}
see \cite{RS}. The tight-frame coefficients
of any given vector $\bv$ corresponding to filter $h_i$ can be obtained
by convolving $h_i$ with $\bv$. In matrix terms, we can
construct, for each filter, its corresponding
filter matrix which is just the Toeplitz matrix with diagonals
given by the filter coefficients, e.g. $H_0=\frac{1}{4}{\rm tridiag}[1, \ 2, \ 1]$.
Then the 1D tight-frame forward transform  is given by
\begin{equation} \label{transform}
{\cal A} =
\begin{bmatrix}
H_0 \\
H_1 \\
H_2
\end{bmatrix}
.
\end{equation}
To apply the tight-frame transform onto $\bv$ is equivalent to computing
${\cal A}\bv$, and $H_i \bv$ gives the tight-frame coefficients corresponding
to the filter $h_i$, $i=1,2,3$.

The $d$-dimensional  piecewise linear B-spline tight-frame
is constructed by tensor products from the 1D tight-frame
above, see \cite{DS}.
For example, in 2D, there are nine  filters
given by $h_{ij}\equiv h_i^T \otimes h_j$ for $i, j = 1, 2, 3$, where
$h_i$ is given in (\ref{filter}). For any 2D image $f$, the tight-frame
coefficients with respect to $h_{ij}$ are obtained by
convolving $h_{ij}$ with $f$. The corresponding forward transform
${\cal A}$ will be a stack of nine block-Toeplitz-Toeplitz-block matrices
(cf. (\ref{transform})). The tight-frame coefficients are given by the
matrix-vector product ${\cal A} \bff $,
where  $\bff = \textrm{vec}(f)$
denotes the vector
obtained by concatenating the columns of $f$.

Dual-tree complex wavelet transform (D$\mathbb{C}$WT)
was firstly introduced by Kingsbury \cite{K1,K2}.
Apart from having the usual perfect reconstruction property, shift-invariance
property and linear complexity, it also has the nice directionally selective
property  at $\pm 15^\circ, \pm 45^\circ, \pm 75^\circ$.
The idea is to use two different sets of filters: one gives the real
part of the transform and the other gives the imaginary part.
 Let \{$g_0, g_1$\} and \{${h}_0, {h}_1$\}
denote the two different sets of  orthonormal filters,
where $g_0, {h}_0$
are the low pass filters and  $g_1, {h}_1$ are the high
pass filters. Let the square matrix
$A_{g_i {h}_j}$ denote the 2D separable wavelet transform implemented
using $g_i$ along the rows and ${h}_j$ along the columns,
and define $A_{{h}_i{h}_j}, A_{g_ig_j}, A_{{h}_ig_j}$
similarly, where $i, j = 1, 2$. Then the forward
transform for the 2-dimensional
D$\mathbb{C}$WT is represented by
\[
{\cal A} \bff :=
\frac{1}{\sqrt{8}}
\begin{bmatrix}
I & -I  & 0 & 0\\
I & I & 0 & 0 \\
0 & 0& I & I  \\
0 & 0 & I & -I
\end{bmatrix}
\begin{bmatrix}
A_{g_ig_j}  \\
A_{ {h}_i {h}_j}  \\
A_{ {h}_ig_j}  \\
A_{g_i {h}_j} \\
\end{bmatrix}
\bff,
\quad
i, j = 1, 2.
\]
We see that the 2-dimensional D$\mathbb{C}$WT requires four different wavelet
transforms in parallel. We refer the readers to \cite{CWTweb,DI, GNG,SBK}
and the references therein for more details and for the implementation of
3-dimensional D$\mathbb{C}$WT. In practice, the D$\mathbb{C}$WT are implemented by
using different sets of filters for the first level and the remaining levels,
see \cite{SBK}. The filters and the Matlab code for D$\mathbb{C}$WT can
be obtained from \cite{CWTweb}.

The tight-frame algorithms, as given in \cite{CSS03,CCSS08,CCSS09,JRZ,CCSS},
are of the following generic form:
\begin{eqnarray}
{\bff}^{(i+\fot)} &=& \mathcal{U}({\bff}^{(i)}), \label{q1}  \\
{\bff}^{(i+1)} &=& {\cal A}^T\mathcal{T}_\lambda({\cal A} {\bff}^{(i+\fot)}),\quad
i=1, 2,\ldots. \label{q2}
\end{eqnarray}
Here $\bff^{(i)}$ is an approximate solution at the $i$-th iteration,
$\mathcal{U}$ is a problem-dependent operator, and
$\mathcal{T}_\lambda(\cdot)$ is
the soft-thresholding operator defined as follows.
Given vectors $\bv=[v_1, \cdots, v_n]^T$ and
${\bf \lambda}=[\lambda_1, \cdots, \lambda_n]^T$,
$\mathcal{T}_\lambda(\bv) \equiv
[t_{\lambda_1}(v_1), \cdots, t_{\lambda_n}(v_n)]^T$,
where
\begin{equation}\label{ltk}
t_{\lambda_k}(v_k) \equiv
\left\{
\begin{array}{lcl}
\textrm{sgn}(v_k)(|v_k|-\lambda_k), & &  \text{if}\  |v_k| > \lambda_k, \\
0, \qquad \qquad \qquad  & &  \text{if}\  |v_k| \leq \lambda_k.
\end{array}
\right.
\end{equation}
For how to choose $\lambda_k$, see \cite{D}.

We remark that (\ref{q2}) performs a tight-frame denoising and smoothing
on the image while (\ref{q1})
performs a data-fitting according to the specific problem at hand.
For high-resolution image reconstruction problem \cite{CCSS},
astronomical infra-red imaging \cite{CCSS08},
impulse noise removal \cite{CCSS09}, and inpainting
problem \cite{CSS03}, the tight-frame algorithm (\ref{q1})--(\ref{q2})
has been shown to be convergent to a functional with the
regularization term being the
1-norm of the tight-frame coefficients, see \cite{JRZ}.

\section{Tight-Frame Based Algorithm for Segmentation}\label{sec3}

The technology of Magnetic Resonance Angiography (MRA) imaging is based
on detection of signals from flowing blood and suppression of signals
from other static tissues, so that the blood vessels appear as
high intensity regions in the
image, see Fig. \ref{fig_3parts}(a). The structures to be segmented are
vessels of variable diameters which are close to each
other.  In general in medical images, speckle noise and weak edges make it
difficult to identify the structures in the  image. Fortunately, the MRA
images contain some properties that can be exploited
to derive a good segmentation algorithm. From Fig. \ref{fig_3parts}(a),
we see that the pixels near the boundary of the vessels
are not exactly of one value, but they are in some range, whereas
the values of the pixels in other parts are far from this range. Thus
the main idea of our algorithm is to approximate this range accurately.
We will obtain the range iteratively by a tight-frame algorithm.
The main steps are as follows. Suppose in the beginning
of the $i$th iteration, we are given an approximate image $f^{(i)}$
and a set $\Lambda^{(i)}$ that contains all possible boundary pixels.
Then we (i) use $\Lambda^{(i)}$ to estimate an appropriate range
$[\alpha_i, \beta_i]$  that contains the pixel values of possible
boundary pixels; (ii) use the range to separate the image into
three parts---those below the range (background pixels), inside
the range (possible boundary pixels), and above the
range (pixels in the vessels);
(iii) denoise and smooth the inside part by the tight-frame
algorithm to get a new image $f^{(i+1)}$.
We stop when the image becomes binary and this
happens within 10 iterations for the real MRA images
we tested, see Table \ref{cardi}
in Section \ref{sec5}.
In the followings, we elaborate each of the steps.
Without loss of generality, we assume that the given image $f$
has dynamic range in [0, 1].

\vspace{3mm}
\noindent{\bf Initialization.} To start the algorithm at $i=0$, we need
to define $f^{(0)}$, the initial guess, and $\Lambda^{(0)}$,
the initial set of possible boundary pixels. Naturally, we set $f^{(0)} =f$,
the given image.
For $\Lambda^{(0)}$, since we do not have any knowledge of where the
boundary pixels are at the beginning, we identify them by using the gradient
of $f$, i.e. we locate them as pixels where the gradient is bigger than a
threshold $\epsilon$. More precisely, let $\Omega$ be the index set of all the
pixels in the image, then we define
\begin{equation} \label{lambda0}
\Lambda^{(0)} \equiv \{j \in \Omega  \ | \ \| [\nabla f]_j\|_1 \ge \epsilon\}.
\end{equation}
Here $[\nabla f]_j$ is the discrete gradient of $f$ at the $j$th pixel.
Once $f^{(0)}$ and $\Lambda^{(0)}$ are defined, we can start the iteration.

\vspace{3mm}
\noindent{\bf Step (i): computing the range $[\alpha_{i}$, $\beta_{i}]$.}
Given $\Lambda^{(i)}$, we first compute the mean pixel value on $\Lambda^{(i)}$:
\begin{equation} \label{mui}
\mu^{(i)} = \frac{1}{|\Lambda^{(i)}|} \sum_{j\in \Lambda^{(i)}} f^{(i)}_j,
\end{equation}
where $| \cdot |$ denotes the cardinality of the set
and $f^{(i)}_j$ is the pixel value of pixel $j$ in image $f^{(i)}$.
Then we compute the mean pixel values of the two sets separated by $\mu^{(i)}$:
\begin{equation} \label{muminus}
\mu^{(i)}_{-} = \frac{1}{|\{ j\in \Lambda^{(i)}: f ^{(i)}_j \leq \mu^{(i)}\}|}
\sum_{\{ j\in \Lambda^{(i)}: f ^{(i)}_j \leq \mu^{(i)}\}}f ^{(i)}_j,
\end{equation}
and
\begin{equation} \label{muplus}
\mu^{(i)}_{+} =
\frac{1}{|\{ j\in \Lambda^{(i)}: f ^{(i)}_j \geq \mu^{(i)}\}|}
\sum_{\{ j\in \Lambda^{(i)}: f ^{(i)}_j \geq \mu^{(i)}\}}f ^{(i)}_j.
\end{equation}
While $\mu^{(i)}$ reflects the mean energy
of the set of possible boundary pixels, $\mu^{(i)}_{-}$ and $\mu^{(i)}_{+}$ reflect
the mean energies of the pixels on the boundary closer to the background
and closer to the vessels respectively. We define
\begin{equation}
\alpha_i \equiv \max\left\{\frac{\mu^{(i)} + \mu^{(i)}_{-}}{2},0\right\}, \quad
\beta_i \equiv  \min\left\{\frac{\mu^{(i)}+ \mu^{(i)}_{+}}{2},1\right\}.
\label{int_i}
\end{equation}

\vspace{3mm}
\noindent
{\bf Step (ii): thresholding the image into three parts.}
Using the range $[\alpha_i, \beta_i] \subseteq [0, 1]$, we can separate
the image $f^{(i)}$ into three parts---those below, inside, and above the
range, see Fig. \ref{fig_3parts}(b) for $i=0$. Since our aim is to create a
binary image, we threshold those pixel
values that are smaller than $\alpha_i$ to 0, those larger than $\beta_i$ to 1,
and those in between, we stretch them between 0 and 1 using
a simple linear contrast stretch, see \cite{GW}. If there
are no pixels in between $\alpha_i$ and $\beta_i$, then the
threshold image is binary and the algorithm stops.
More precisely, let
\begin{eqnarray}
M_i = {\rm max}\{ f^{(i)}_j \ | \      \alpha_i \le f^{(i)}_j \le
\beta_i, j \in {\Lambda^{(i)}} \}, \quad
m_i ={\rm min}\{ f^{(i)}_j \ | \      \alpha_i \le f^{(i)}_j \le
\beta_i, j \in {\Lambda^{(i)}} \}, \label{Mm}
\end{eqnarray}
then we define
\begin{equation}
f^{(i+\fot)}_j =
\left\{
\begin{array}{lclll}
0, & & {\rm if} \  f^{(i)}_j  \le \alpha_i, \\
\frac{f^{(i)}_j - m_i}{M_i-m_i}, & &
 \alpha_i \le f^{(i)}_j \le \beta_i, & & \quad {\rm for\ all} \ j \in {\Omega}.
\label{eq5} \\
1, & & {\rm if} \  \beta_i \le  f^{(i)}_j ,
\end{array}
\right.
\end{equation}

Fig. \ref{fig_3parts}(c) shows the threshold and stretched
image from Fig. \ref{fig_3parts}(b), where the yellow pixels
are pixels we have classified not on the boundary,
i.e. $f^{(i+\fot)}_j=0$ (signifying pixel $j$ is
in the background) or $f^{(i+\fot)}_j=1$
(signifying pixel $j$ is inside the vessel).
The remaining pixels are remained to be classified and we
denote the set by:
\begin{equation}\label{Lambdai}
\Lambda^{(i+1)}=\{ j \ | \ 0 < f^{(i+\fot )}_j < 1 , j \in {\Omega} \}.
\end{equation}
Note that those
pixels with values $m_i$ and $M_i$ are mapped to 0 and 1 respectively
and hence will not be in $\Lambda^{(i+1)}$.
Next we denoise and smooth
$f^{(i+\fot)}$ on $\Lambda^{(i+1)}$.

\begin{figure}
\begin{center}
\begin{tabular}{ccc}
\includegraphics[scale=0.5]{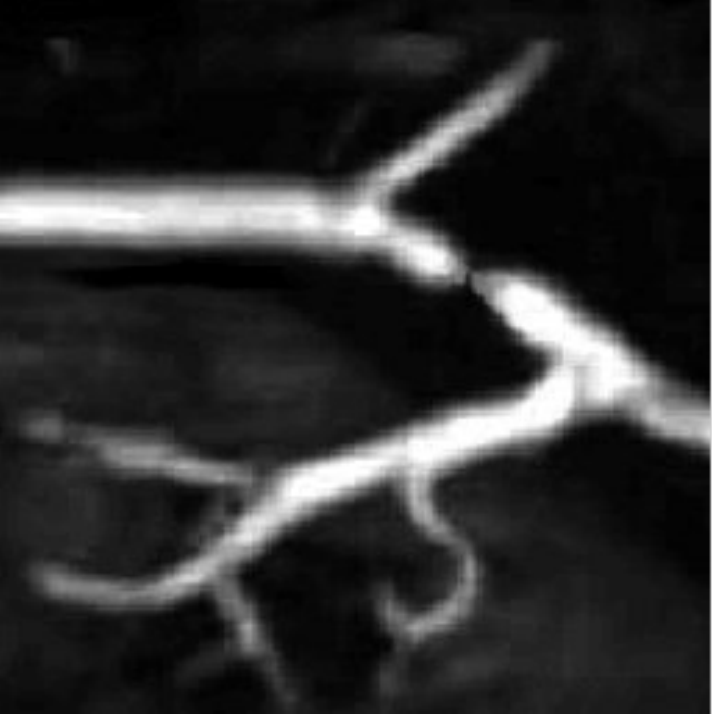}&
\includegraphics[scale=0.5]{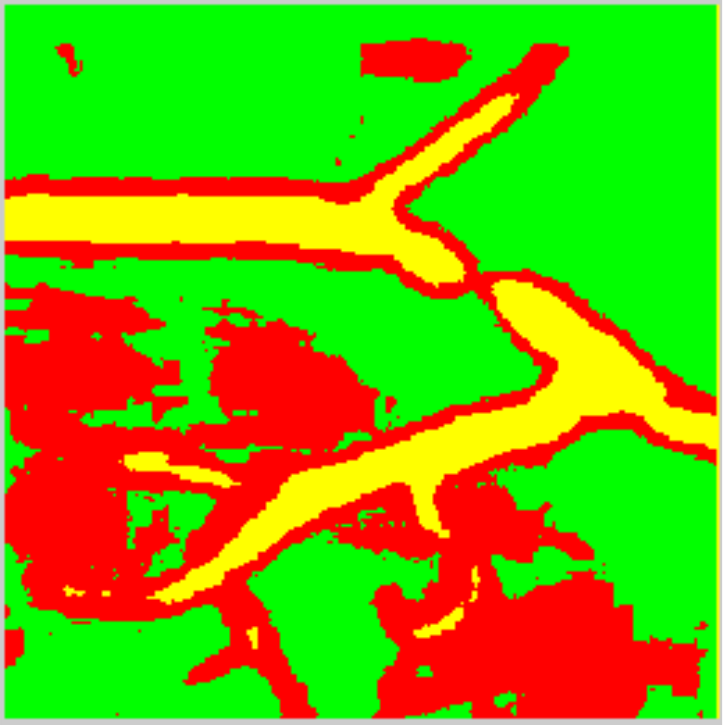}&
\includegraphics[scale=0.5]{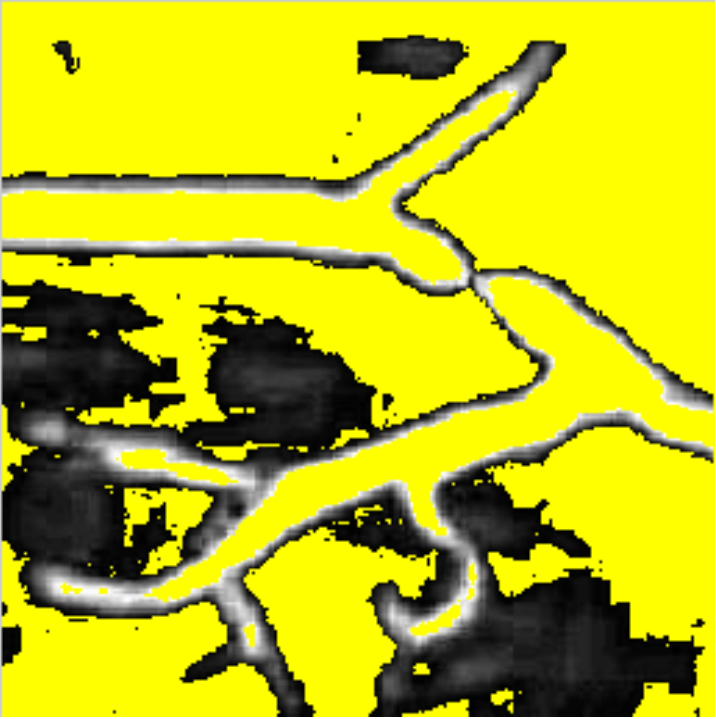}\\
(a)&(b)&(c)
\end{tabular}
\end{center}
\caption{(a) Given MRA image. (b) Three parts of the given image
(green--below, red--in between, and yellow--above).
(c) Threshold and stretched image by (\ref{eq5})
(yellow pixels are with value 0 or 1).} \label{fig_3parts}
\end{figure}

\vspace{3mm}\noindent
{\bf Step (iii): tight-frame iteration.}
To denoise and smooth the image $f^{(i+\fot)}$ on $\Lambda^{(i+1)}$,
we apply the tight-frame iteration (\ref{q2}) on $\Lambda^{(i+1)}$.
More precisely, if $j \not \in \Lambda^{(i+1)}$, then
we set $f^{(i+1)}_j=f^{(i+\fot)}_j$; otherwise, we use (\ref{q2}) to get
$f^{(i+1)}_j$. To write it out clearly,
let $\bff^{(i+\fot)}={\rm vec}(f^{(i+\fot)})$,
and $P^{(i+1)}$ be the diagonal matrix where the
diagonal entry is 1 if the corresponding index is in $\Lambda^{(i+1)}$,
and 0 otherwise. Then
\begin{equation} \label{tfi}
\bff^{(i+1)} \equiv (I-P^{(i+1)})\bff^{(i+\fot)} + P^{(i+1)}
{\cal A}^T\mathcal{T}_{\lambda}({\cal A}\bff^{(i+\fot)}).
\end{equation}
By reordering the entries of the vector $\bff^{(i+1)}$
into columns, we obtain the image $f^{(i+1)}$.
Note that the effect of (\ref{tfi}) is to denoise and smooth
the image on $\Lambda^{(i+1)}$, see \cite{JRZ}.
Since the pixel values of all pixels outside $\Lambda^{(i+1)}$
are either $0$ or $1$, the cost of the tight-frame transform
in (\ref{tfi}), such as the computation of ${\cal A}\bff^{(i+\fot)}$,
can be reduced significantly by taking advantage of the fact that all computations
can be done only on pixels around $\Lambda^{(i+1)}$.

\vspace{3mm}
\noindent{\bf Stopping criterion.}
We stop the iteration when all the pixels of $f^{(i+\fot)}$ are
either of value $0$ or $1$, or equivalently when
$\Lambda^{(i)}=\emptyset$. For the binary image $f^{(i+\fot)}$,
all the pixels with value 0 are considered as background
pixels and pixels with value 1 constitute the tubular structures.

Below we give the full algorithm and show that
it always converges to a binary image.

\begin{center}
{\bf Algorithm 1:} Tight-frame algorithm for segmentation \\
\vspace{0.1in}
\begin{tabular}{rll}\hline
 1.& Input: given image $f$. \cr
 2.& Set $f^{(0)}=f$  and $\Lambda^{(0)}$ by (\ref{lambda0}) \cr
  3.& Do $i=0, 1,\ldots,$ until stop \\
   & (a) Compute $[\alpha_i, \beta_i]$ by $(\ref{int_i})$. \cr
   & (b) Compute $f^{(i+\fot)}$ by (\ref{eq5}).\cr
   & (c) Stop if $f^{(i+\fot)}$ is a binary image. \cr
   & (d) Compute $\Lambda^{(i+1)}$ by (\ref{Lambdai}). \cr
   & (e) Update $f^{(i+\fot)}$ to $f^{(i+1)}$ by (\ref{tfi}).\cr
 4.& Output: binary image $f^{(i+\fot)}$. \cr
 \hline
\end{tabular}
\end{center}
\vspace{0.1in}

\begin{theorem} \label{thm1}
Our tight-frame algorithm will converge to a binary image.
\end{theorem}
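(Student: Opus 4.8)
The plan is to prove something slightly stronger than stated: that Algorithm~1 halts after finitely many iterations. Since the output returned at Step~4 is binary by construction, this immediately gives the theorem (the iterates are in fact eventually constant). The quantity to monitor is the cardinality of the ``undetermined'' set, $|\Lambda^{(i)}|$, and the strategy is: (1) show $\Lambda^{(i+1)}\subseteq\Lambda^{(i)}$ for all $i\ge 1$; (2) show this inclusion is strict whenever the algorithm has not yet stopped; (3) conclude by finiteness of $\Omega$, using that $\Lambda^{(i+1)}=\emptyset$ is exactly the condition under which Step~3(c) terminates with a binary $f^{(i+\fot)}$.

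For step~(1), the key remark is that the update (\ref{tfi}) alters $f^{(i+\fot)}$ only on indices in $\Lambda^{(i+1)}$: if $j\notin\Lambda^{(i+1)}$ then $(P^{(i+1)})_{jj}=0$, so $f^{(i+1)}_j=f^{(i+\fot)}_j$, and by the definition (\ref{Lambdai}) of $\Lambda^{(i+1)}$ this value equals $0$ or $1$. Hence for $i\ge 1$ every pixel outside $\Lambda^{(i)}$ enters iteration~$i$ carrying value $0$ or $1$; since $\alpha_i\ge 0$ and $\beta_i\le 1$ by (\ref{int_i}), the three-way thresholding (\ref{eq5}) maps $0\mapsto 0$ and $1\mapsto 1$, so such a pixel remains binary in $f^{(i+\fot)}$, i.e.\ is not in $\Lambda^{(i+1)}$. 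This gives $\Lambda^{(i+1)}\subseteq\Lambda^{(i)}$ for $i\ge1$. One cannot expect this at $i=0$, because $f^{(0)}=f$ need not be binary off $\Lambda^{(0)}$; keeping track of the difference between $i=0$ and $i\ge1$ is the main bit of bookkeeping.

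For step~(2), suppose the algorithm does not stop at iteration $i\ge1$, so $f^{(i+\fot)}$ is not binary; then the ``in-between'' set $S_i=\{\,j\in\Lambda^{(i)} : \alpha_i\le f^{(i)}_j\le\beta_i\,\}$ is nonempty and $m_i<M_i$ in (\ref{Mm}). Indeed, if $S_i=\emptyset$ then, by step~(1), every pixel is thresholded to $0$ or $1$ and $f^{(i+\fot)}$ is already binary; and the degenerate case $m_i=M_i$ collapses $S_i$ to a single intensity, which the contrast stretch again turns into a binary image. With $m_i<M_i$, a pixel attaining the minimum $m_i$ is sent by (\ref{eq5}) to $\frac{m_i-m_i}{M_i-m_i}=0$ and a (distinct) pixel attaining $M_i$ to $1$; both belong to $\Lambda^{(i)}$ but not to $\Lambda^{(i+1)}$. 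Combined with step~(1) this yields $|\Lambda^{(i+1)}|\le|\Lambda^{(i)}|-2<|\Lambda^{(i)}|$. Since $1\le|\Lambda^{(i)}|\le|\Omega|<\infty$ whenever iteration $i\ge1$ is reached without stopping, the sequence $(|\Lambda^{(i)}|)_{i\ge1}$ of positive integers cannot keep strictly decreasing, so Step~3(c) must fire within at most $|\Omega|$ iterations, producing a binary image.

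I expect no genuine obstacle; the only care needed is (a) confirming that all pixel values stay in $[0,1]$ throughout --- so that $\alpha_i,\beta_i\in[0,1]$ and, via $\mu^{(i)}_-\le\mu^{(i)}\le\mu^{(i)}_+$, the interval $[\alpha_i,\beta_i]$ is nonempty --- which follows from the clamps $\max\{\cdot,0\}$, $\min\{\cdot,1\}$ in (\ref{int_i}) together with reading (\ref{eq5}) as a contrast stretch onto $[0,1]$; and (b) the two degenerate cases ($S_i=\emptyset$ and $m_i=M_i$) noted above, each dispatched in one line. Everything else reduces to the monotonicity count in step~(2).
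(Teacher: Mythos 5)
Your proof is correct and follows essentially the same route as the paper's: both arguments track $|\Lambda^{(i)}|$, observe that a pixel attaining the extremal value $M_i$ (you also use $m_i$) is mapped to a binary value by (\ref{eq5}) and hence leaves the undetermined set, and conclude by finiteness. You are in fact somewhat more careful than the paper, which leaves the containment $\Lambda^{(i+1)}\subseteq\Lambda^{(i)}$, the special role of the $i=0$ step, and the degenerate cases $S_i=\emptyset$, $m_i=M_i$ implicit.
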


\begin{proof} From (\ref{Lambdai}), it suffices to prove that $|\Lambda^{(i)}|=0$
at some finite step $i>0$. By (\ref{Mm}), if $f^{(i+\fot)}$ is not yet
a binary image, then there will be at least one $j \in \Lambda^{(i)}$
such that $f^{(i)}_j =M_i$. By (\ref{eq5}),
 $f^{(i+\fot)} _j$ will be set to 1 and hence by (\ref{Lambdai}),
 $j \not \in
\Lambda^{(i+1)}$.  Hence $|\Lambda^{(i+1)}|< |\Lambda^{(i)}|$.
Since $|\Lambda^{(0)}|$ is finite, there must exist some $i$
such that $|\Lambda^{(i)}|=0$.
\end{proof}

We emphasize that the algorithm actually converges within 10 iterations for
the 2D and 3D real images we have tested.

Finally, let us estimate the computation cost of our method for a given
image with $n$ pixels. Since the costs of computing $\mu^{(i)}$,
$\mu^{(i)}_{-}$, $\mu^{(i)}_{+}$, and $[\alpha_i, \beta_i]$
are all of $O(n)$ operations, see (\ref{mui})--(\ref{int_i}); and the cost of
a tight-frame transform is also linear with respect to $n$
(see e.g. (\ref{transform}) where $H_i$ are all tri-diagonal matrices),
we see that the complexity of our algorithm is $O(n)$
per iteration.
In fact, one can speed up the computation tremendously
as all computations can be done only on pixels around $\Lambda^{(i)}$
 and there is no need to carry out the computations in $\Omega$---though
in the numerical tests, we did not optimize
the code and we just carried out the computations in $\Omega$.
According to Table \ref{cardi}, after just 3 iterations, the set
$\Lambda^{(i)}$ contains
only a hundred pixels in the 2D case and just two thousand pixels in
the 3D case.

\section{Numerical Examples}\label{sec5}

In this section, we try our tight-frame segmentation algorithm
(Algorithm 1) on 2D and 3D real MRA images tested in \cite{ESF01}
and \cite{ESF02}. Our algorithm is written in Matlab.
The thresholding parameters $\lambda_k$ used in (\ref{ltk}) are
all chosen to be $\lambda_k \equiv 0.1$; and
we choose $\epsilon=0.003$ and 0.06 for 2D
and respectively 3D images in (\ref{lambda0}).
To display the boundary and the surface in the binary images visually,
we use the Matlab commands ``{\tt contour}'' and ``{\tt isosurface}''
for 2D and 3D images respectively. The
tight-frame we used is the D$\mathbb{C}$WT downloaded from
\cite{CWTweb} where all parameters in the code are chosen to be the
default values. In particular, the number of wavelet levels
used in the code is 4.
The 2D images are tested in a MacBook with 2.4 GHz processor
and 4GB RAM, while the 3D images, because of their sizes,
are tested on a node with 120GB RAM in a PC-cluster.
We compare our method with those representative
algorithms from different approaches:
PDE-based \cite{ESF01}, tight-frame \cite{DCS} and active contour
models \cite{CV}, where the programs are provided by the authors.

\bigskip
{\it Example 1.}
The test image is a $182 \times 182$ MRA image of a carotid
vascular system, see Fig. {\ref{fig_carotid}}(a). The blood
vessels contain regions with high and low intensities,
including some very thin vessels in the middle with intensities as
low as the intensity of the background. Intersections of partial structures
 even increase the difficulty of the segmentation. The segmentation
 results by different methods are given in Fig. {\ref{fig_carotid}}
where the given image is overlaid so that we can compare
the accuracy of the methods.
Clearly, the results of Figs. {\ref{fig_carotid}}(b) and (c) are not satisfactory
since the vessels obtained in Fig. {\ref{fig_carotid}}(b) are
disconnected and some vessels in Fig. {\ref{fig_carotid}}(c) are not detected.
By comparing the parts inside the rectangles in Fig. {\ref{fig_carotid}}(d)
with those in Fig. {\ref{fig_carotid}}(e), we see that our method can extract
clearer boundary than the method in \cite{ESF01}, especially for
handling the artifacts near the boundary.
We also see that the noise in the image has been removed because
of the denoising property of formula (\ref{q2}) as
explained in Section \ref{sec2}. Our method converges
in 5 iterations and requires 0.64 seconds. The first column of
Table \ref{cardi} gives $|\Lambda^{(i)}|$ at each iteration.

\begin{figure}[!htb]
\begin{center}
\begin{tabular}{ccccc}
\includegraphics[width=53mm, height=53mm]{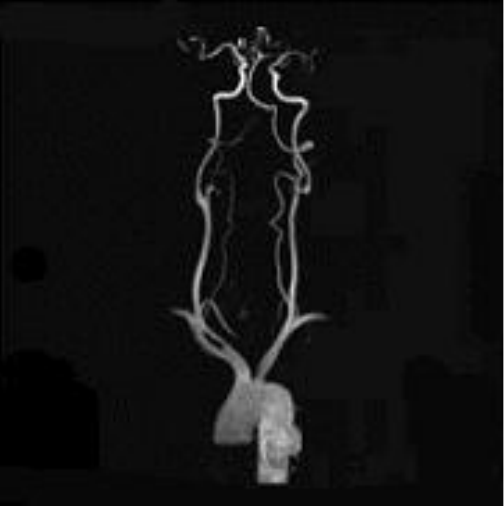}&
\includegraphics[width=26mm, height=53mm]{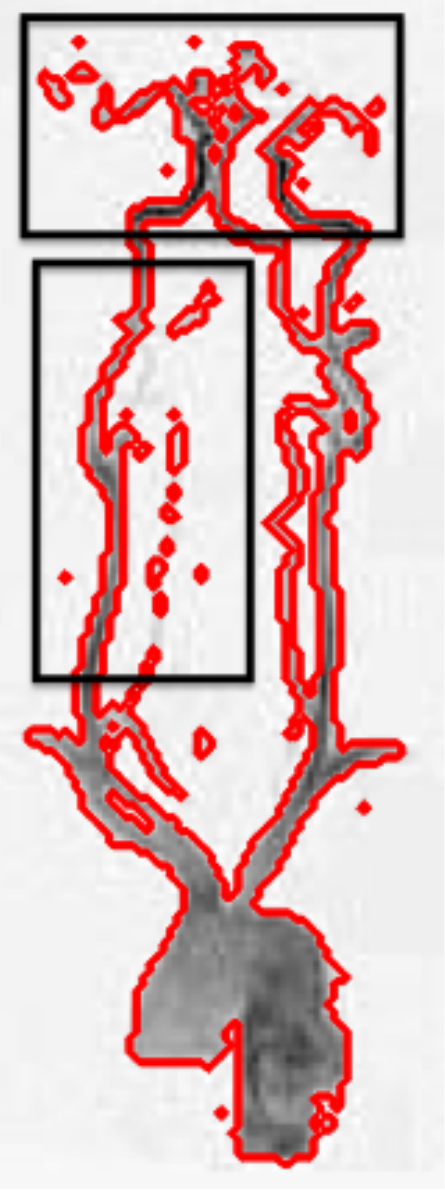}&
\includegraphics[width=26mm, height=53mm]{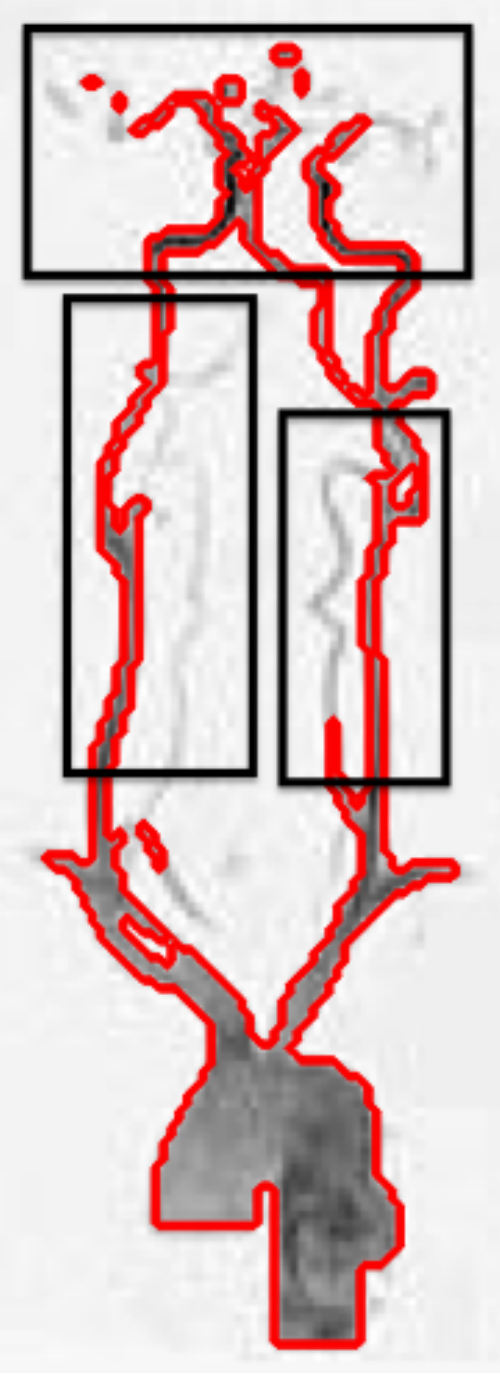}&
\includegraphics[width=26mm, height=53mm]{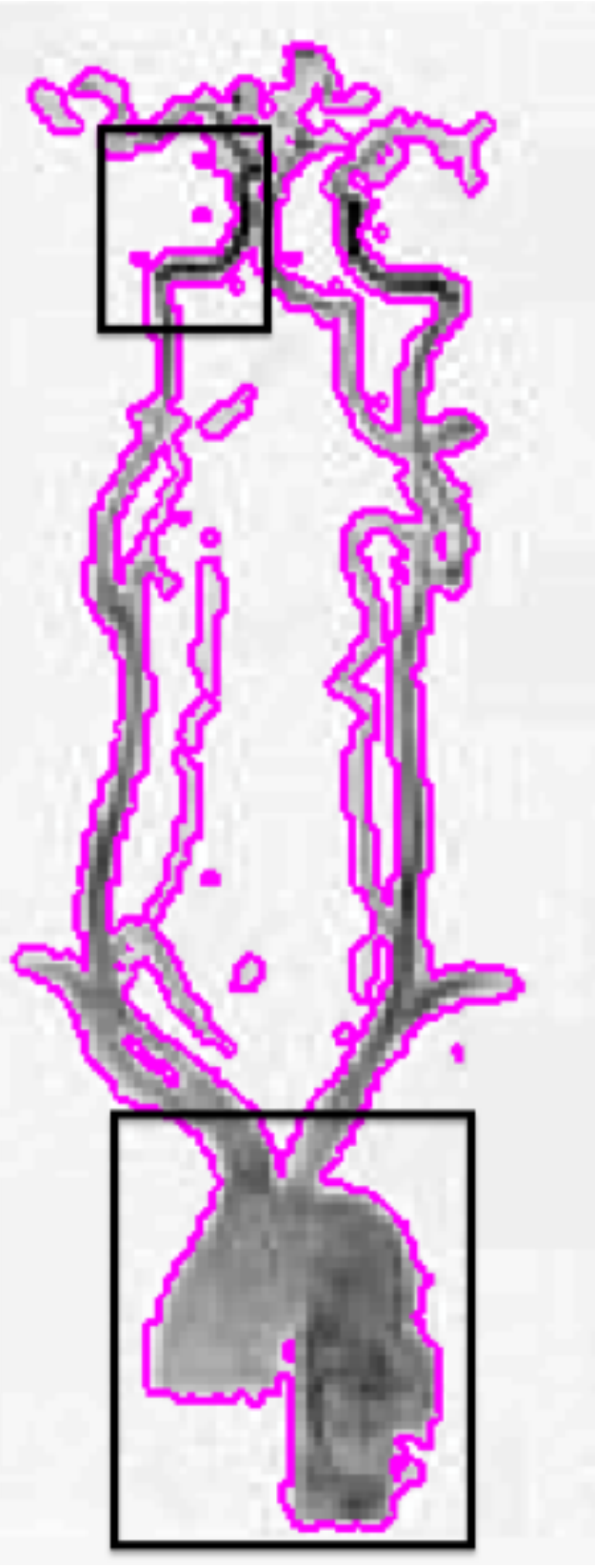}&
\includegraphics[width=26mm, height=53mm]{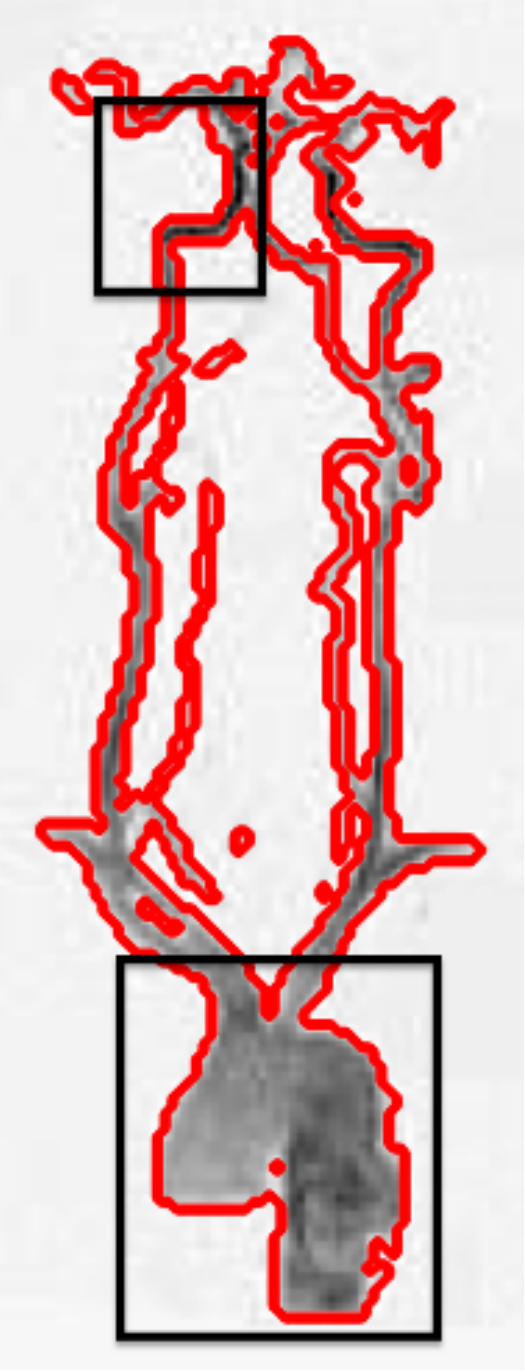}
\\
(a) & (b) & (c) & (d)&(e)
\end{tabular}
\end{center}
\caption{Carotid vascular system segmentation. (a) Given image.
(b), (c) and (d) Results by the methods in \cite{CV}, \cite{DCS} and
\cite{ESF01} respectively. (e) Result of our method.} \label{fig_carotid}
\end{figure}

\begin{table}[!h]
\centering \caption{Cardinality of $\Lambda^{(i)}$ at each iteration of the
four examples}
\vspace{0.1in}
\begin{tabular}{|c||c|c|c|c|}
\hline \vspace{-0.1in}
  & Example 1 & Example 2 & Example 3 & Example 4 \\
$|\Lambda^{(i)}|$ & $|\Omega| = 11284$  & $|\Omega| = 66049$  &
$|\Omega| = 8120601$ & $|\Omega| = 6000000$
\\ \hline
$i = 0$ & 1721  & 9444 & 137330 & 152898
\\ \hline
$i = 1$& 354  & 1943 & 32760 & 32064
\\ \hline
$i = 2$&  82 & 464 & 8795 & 8565
\\ \hline
$i = 3$&  26 & 133 & 2475 & 2391
\\ \hline
$i = 4$&  4 & 32 & 689 & 650
\\ \hline
$i = 5$& 0  & 8 & 189 & 187
\\ \hline
$i = 6$& -  & 0 & 56 & 59
\\ \hline
$i = 7$&  - & - & 15  & 12
\\ \hline
$i = 8$& -  & - & 3 & 2
\\ \hline
$i = 9$& -  & - & 0 & 0
\\ \hline
\end{tabular}
\label{cardi}
\end{table}

\bigskip
{\it Example 2.} The test image is a $256\times 256$ MRA image
of a kidney vascular system as shown in Fig. {\ref{fig_kidney}}(a). This
example shows the ability of our method to reconstruct structures
which present small occlusions along the coherence direction.
Our method converges in 6 iterations with 0.78 seconds;
and the second column in
Table \ref{cardi} gives $|\Lambda^{(i)}|$ at each iteration.
The results by the method in \cite{CV} (Fig. {\ref{fig_kidney}}(b))
and  by the method in \cite{DCS} (Fig. {\ref{fig_kidney}}(c)) are not
good since they can not recover the small occlusions along the
coherence direction, while this can be done by our method and
the method in \cite{ESF01}, see Figs. {\ref{fig_kidney}}(d) and (e).
Furthermore, our method is better than the method in \cite{ESF01}
by comparing the rectangular parts of Fig. {\ref{fig_kidney}}(d)
with those in Fig. {\ref{fig_kidney}}(e), since our method can detect
smoother edges; see Figs. \ref{fig_kidney}(f)--(k)
which are the results of zooming in the
rectangular parts of Figs. {\ref{fig_kidney}}(d) and (e) respectively.
The results also show  that our method is very effective in removing artifacts.

\begin{figure}[!htb]
\begin{center}
\begin{tabular}{ccc}
\includegraphics[width=41mm, height=41mm]{figs/fig1a.pdf}&
\includegraphics[width=41mm, height=41mm]{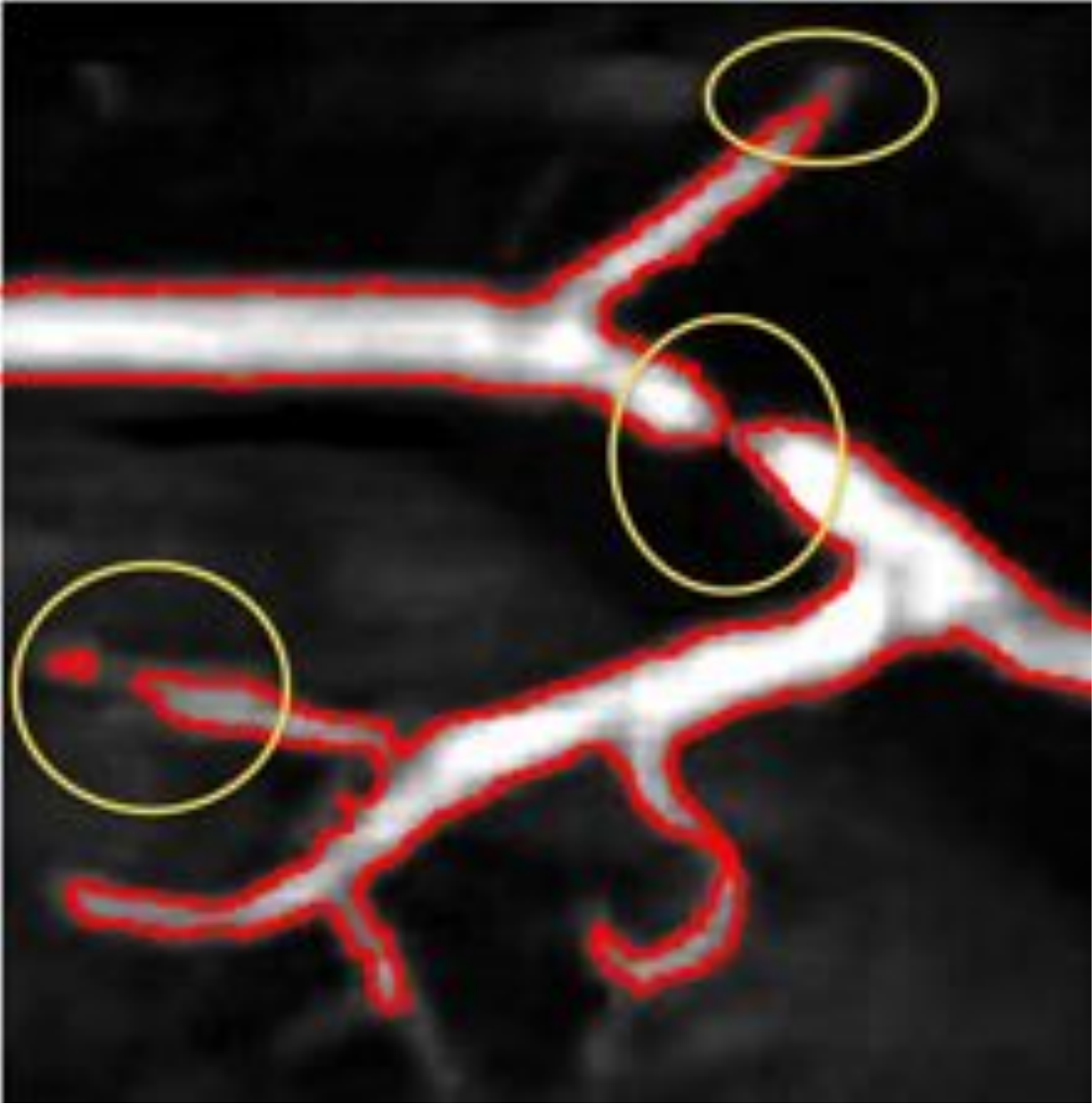}&
\includegraphics[width=41mm, height=41mm]{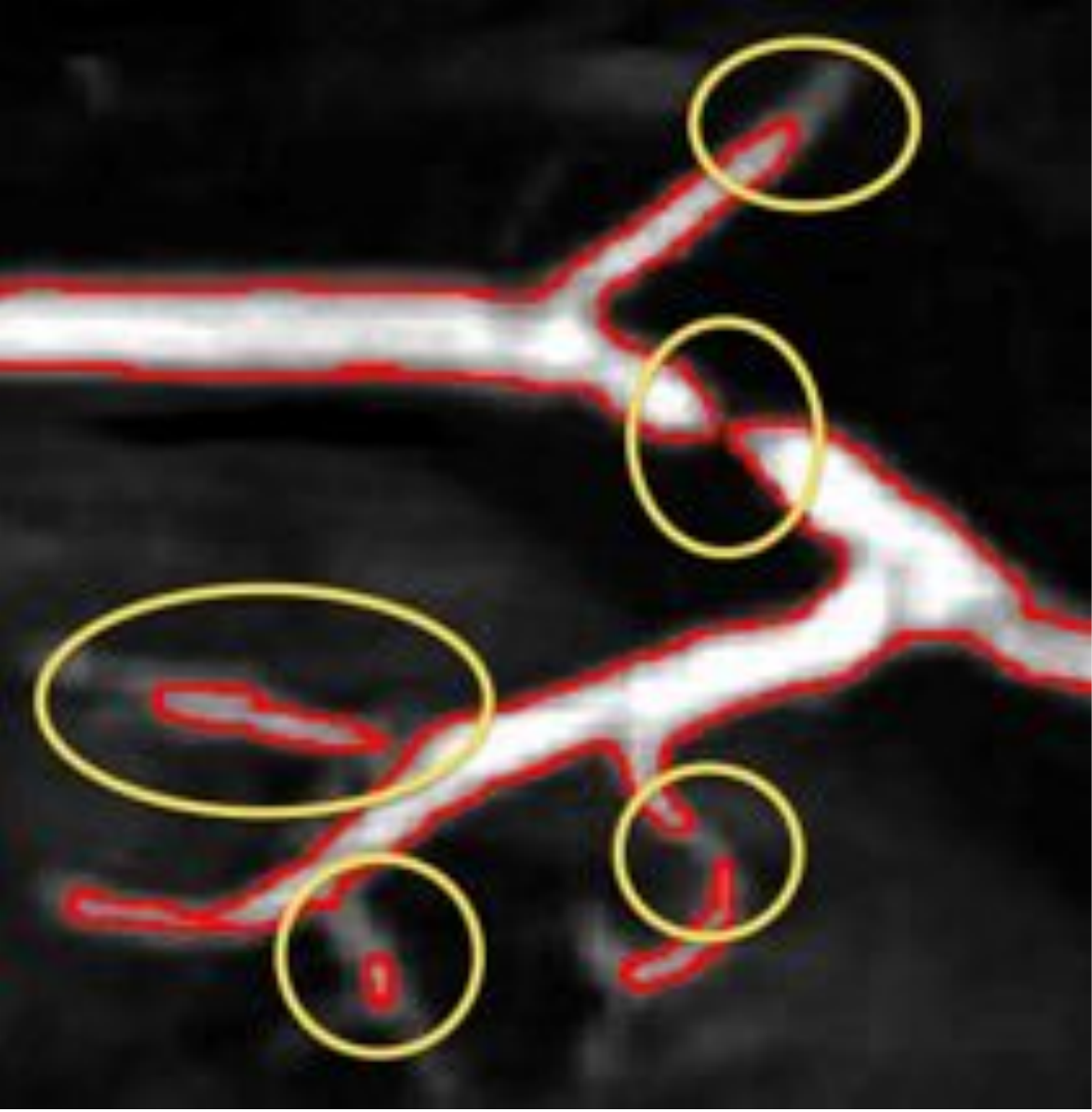}
\\
(a) & (b) & (c)
\end{tabular}
\begin{tabular}{cc}
\includegraphics[width=41mm, height=40mm]{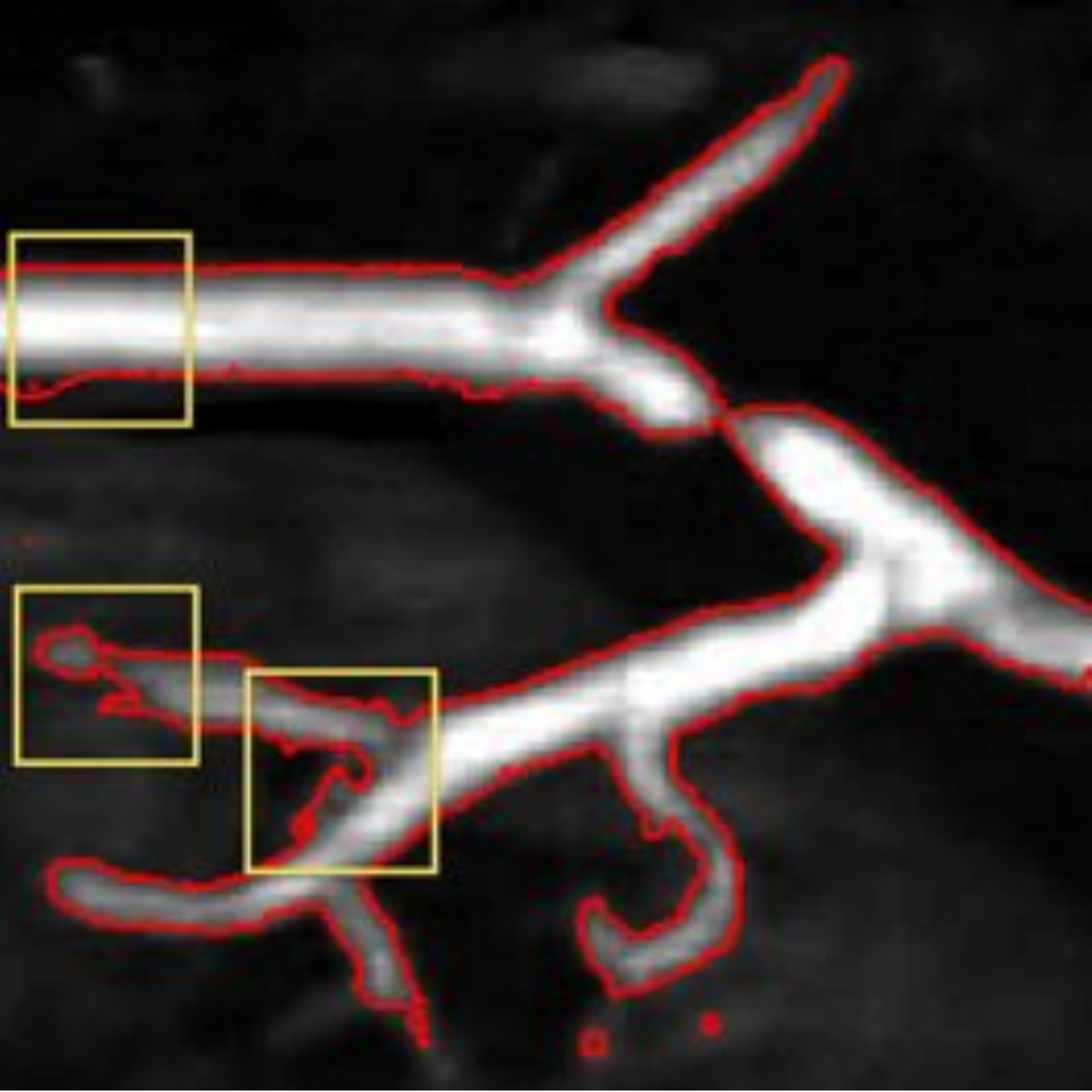} &
\includegraphics[width=41mm, height=40mm]{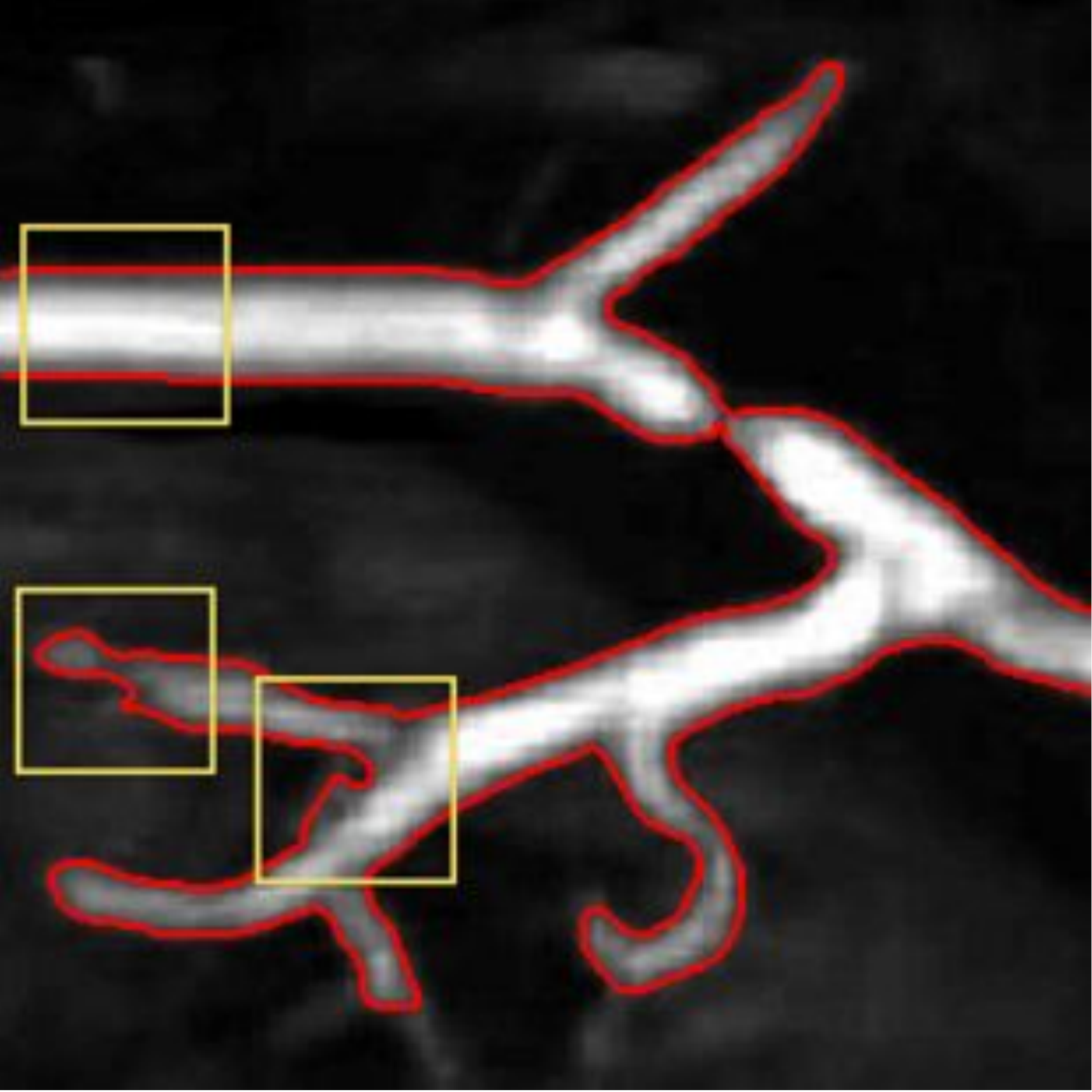}\\
(d) & (e)
\end{tabular}
\begin{tabular}{ccc}
\includegraphics[width=30mm, height=30mm]{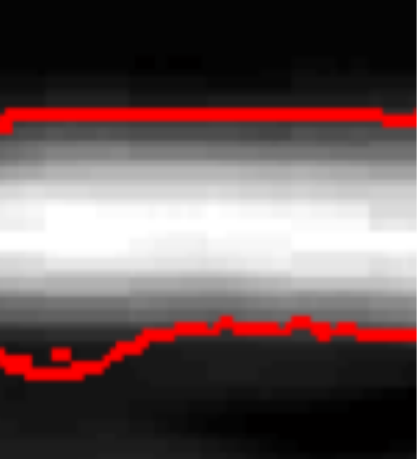}&
\includegraphics[width=30mm, height=30mm]{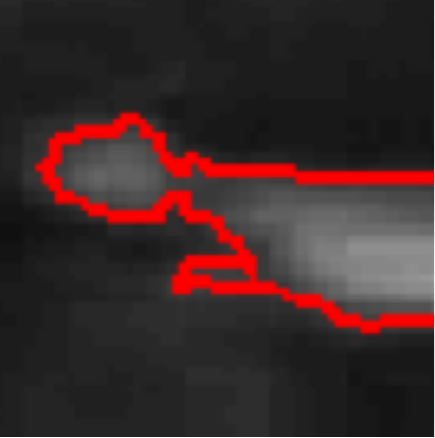} &
\includegraphics[width=30mm, height=30mm]{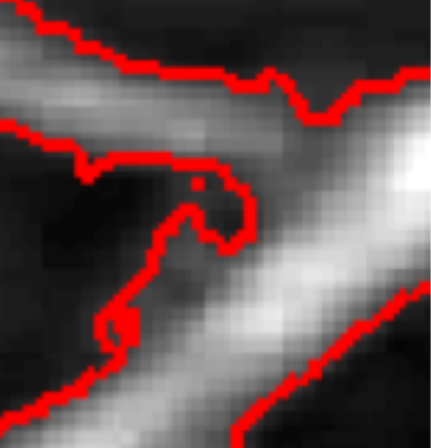} \\
(f) & (g) & (h) \\
\includegraphics[width=30mm, height=30mm]{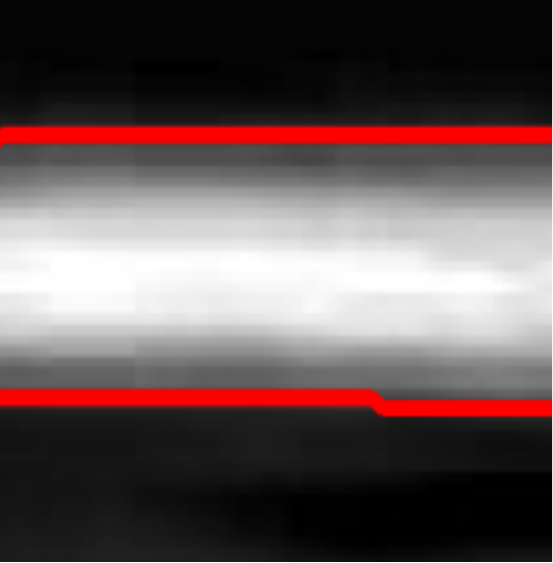}&
\includegraphics[width=30mm, height=30mm]{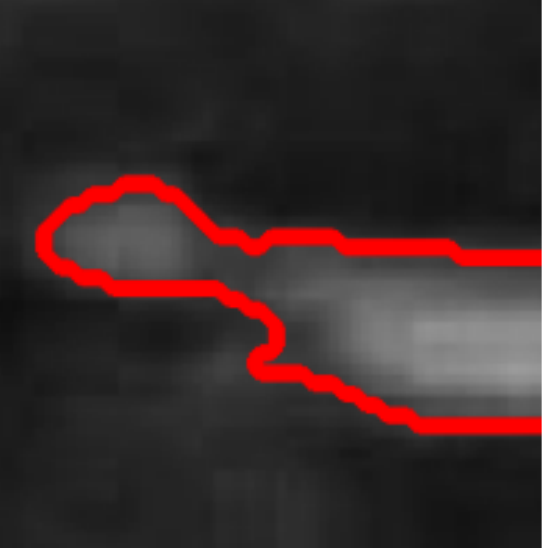} &
\includegraphics[width=30mm, height=30mm]{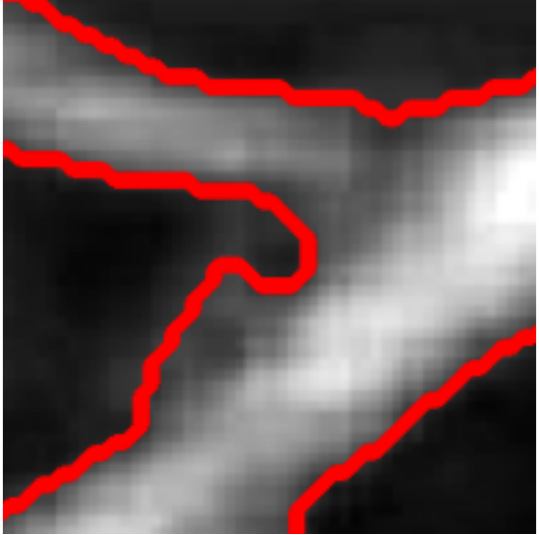}\\
(i) & (j) & (k)
\end{tabular}
\end{center}
\caption{Kidney vascular system segmentation. (a) Given image.
(b), (c) and (d) Results by the methods in \cite{CV}, \cite{DCS} and
\cite{ESF01} respectively. (e) Results by our method.
(f)--(k) are the zoomed-in parts of (d) and (e).} \label{fig_kidney}
\end{figure}

\bigskip
{\it Example 3.} This is a 3D example where we extracted
a volumetric data set of size $201\times 201\times 201$
from a $436\times 436 \times 540$ CTA
(Computed Tomographic Angiography) image of the kidney
vasculature system, see Fig. {\ref{fig_kidney3d}}(a).
Because of different curvatures, diameters, bifurcations, and weak
surfaces impaired by noise, it is hard to detect
the tips of the thin vessels.
Figs. {\ref{fig_kidney3d}}(b) and (c) give the results by
using the method in \cite{ESF02} and our proposed method
respectively.  The figure shows that our method
can give much more details. A visual comparison of the given
image with our result shows that almost all the vessels
are correctly segmented. We note that our method converges in 9 iterations,
see the third column in Table \ref{cardi}.

\begin{figure}[!htb]
\begin{center}
\begin{tabular}{c}
\includegraphics[width=120mm, height=65mm]{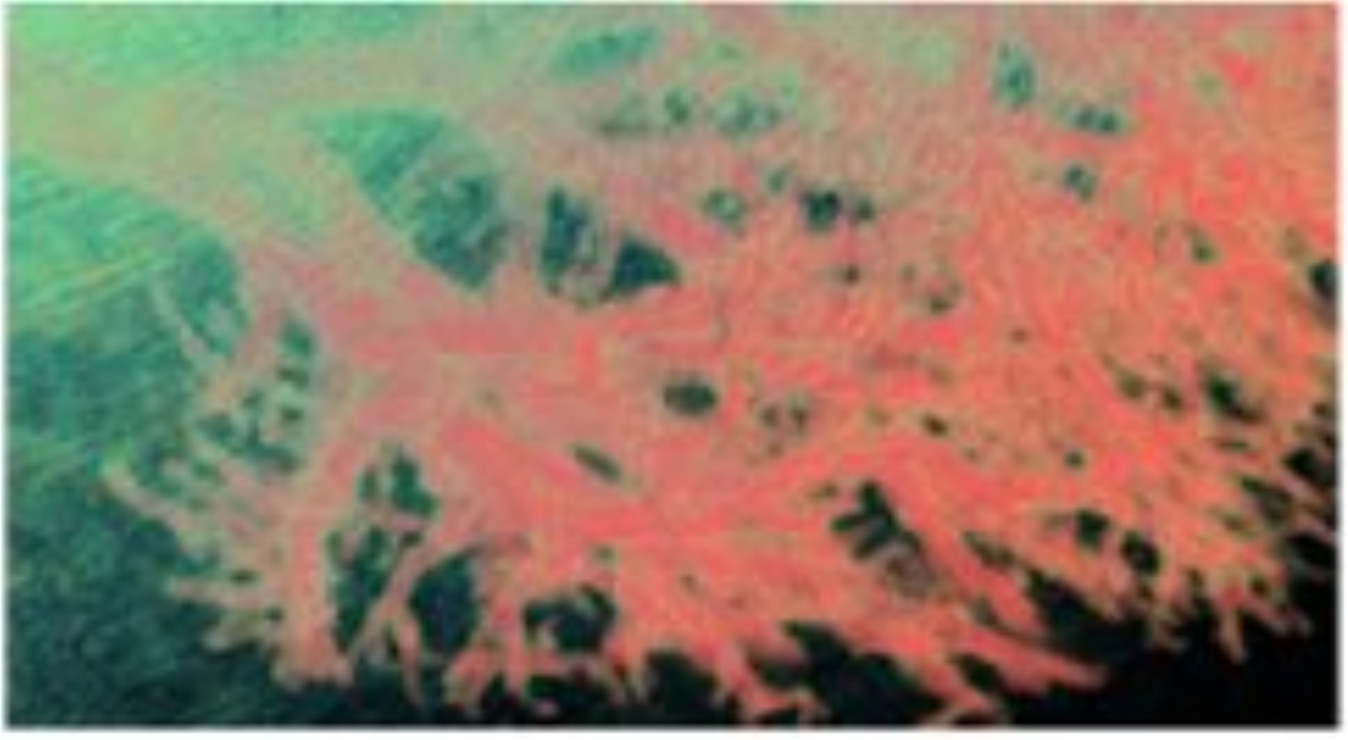}\\
(a)
\end{tabular}
\begin{tabular}{cc}
\includegraphics[width=80mm, height=80mm]{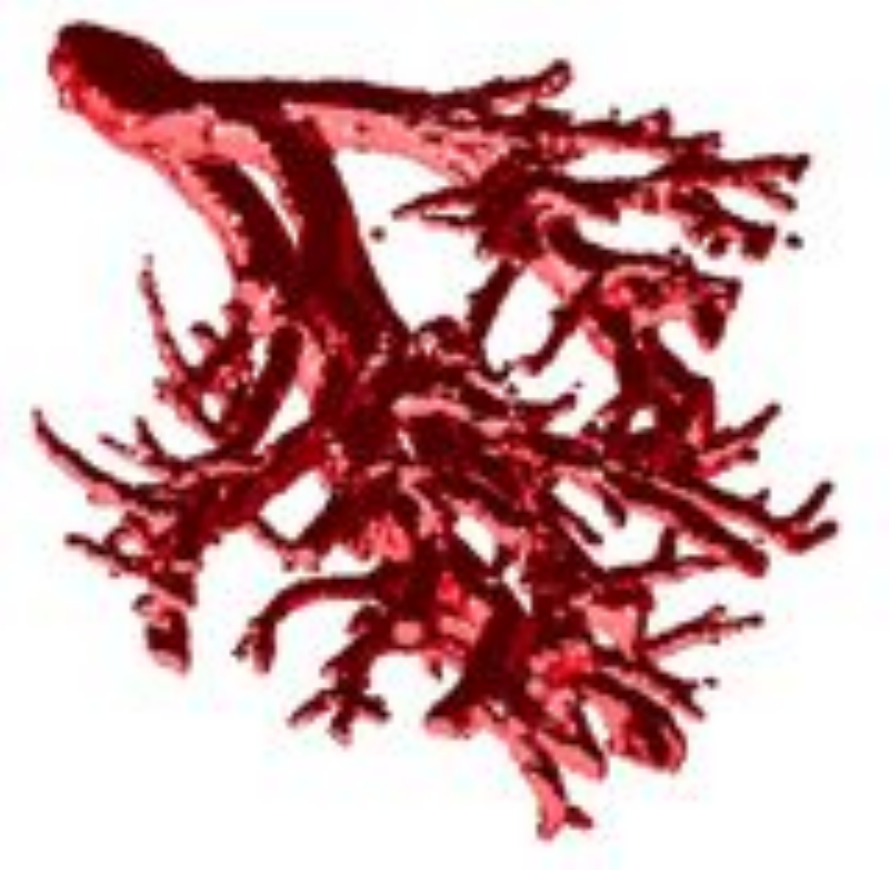} &
\includegraphics[width=80mm, height=80mm]{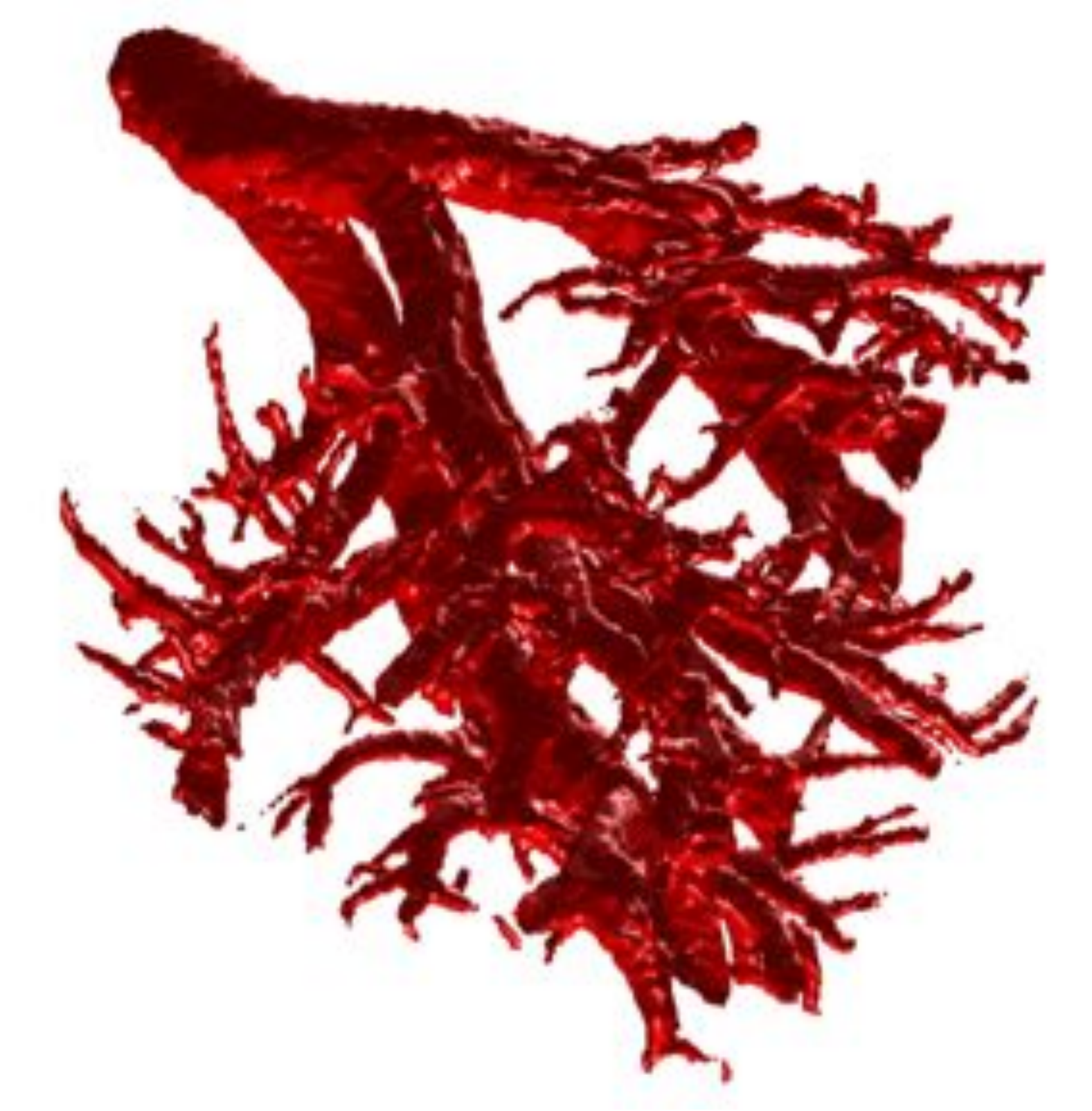} \\
(b) & (c)
\end{tabular}
\end{center}
\caption{Segmentation of the kidney volume data set.
(a) Given CTA image. (b) Result by the method in \cite{ESF02}.
(c) Result by our method.
} \label{fig_kidney3d}
\end{figure}

\bigskip
{\it Example 4.} Our second 3D example is
an MRA data set of a brain aneurysms (vessel wall dilatations).
The $120\times 250 \times 200$ volumetric data set
has been extracted from a $120\times 448\times 540$ MRA
image of the brain-neck vasculature system, see
Fig. {\ref{fig_brain3d}}(a). As in Example 3, the different
curvatures, diameters and bifurcations of the vessels
make it a difficult problem. In addition, the high noise
makes the thin vessels hard to see even by the naked eyes.
For this topological complex tubular structures,
Figs. {\ref{fig_brain3d}}(b) and (c) give the results by the method
in \cite{ESF01} and our method respectively. Obviously, our method
can segment many more thin vessels, especially those corrupted
by the high noise. The results reflect  that our new method is very
effective in handling the noise spread on the surface of the vessels.
For this complicated example, our method converges in 9
iterations only, see the fourth column of Table \ref{cardi}.
Our segmentation technique can be used to compute vessel radii
and other clinically useful measurements in case of aneurysms.
In Fig. {\ref{fig_brain3d}}(c), one may argue that there are some small isolated
points in the image and the surface of the vessels is not
smooth. This can easily be remedied by smoothing our final binary image
by the tight-frame formula (\ref{q2}) one time before we show the image,
since (\ref{q2}) has the denoising property
as we explained in Section \ref{sec2}.
See Fig. {\ref{fig_brain3d}}(d)
for the denoised-and-smoothed image after one iteration of (\ref{q2}).

\begin{figure}[!htb]
\begin{center}
\begin{tabular}{c}
\includegraphics[width=100mm, height=55mm]{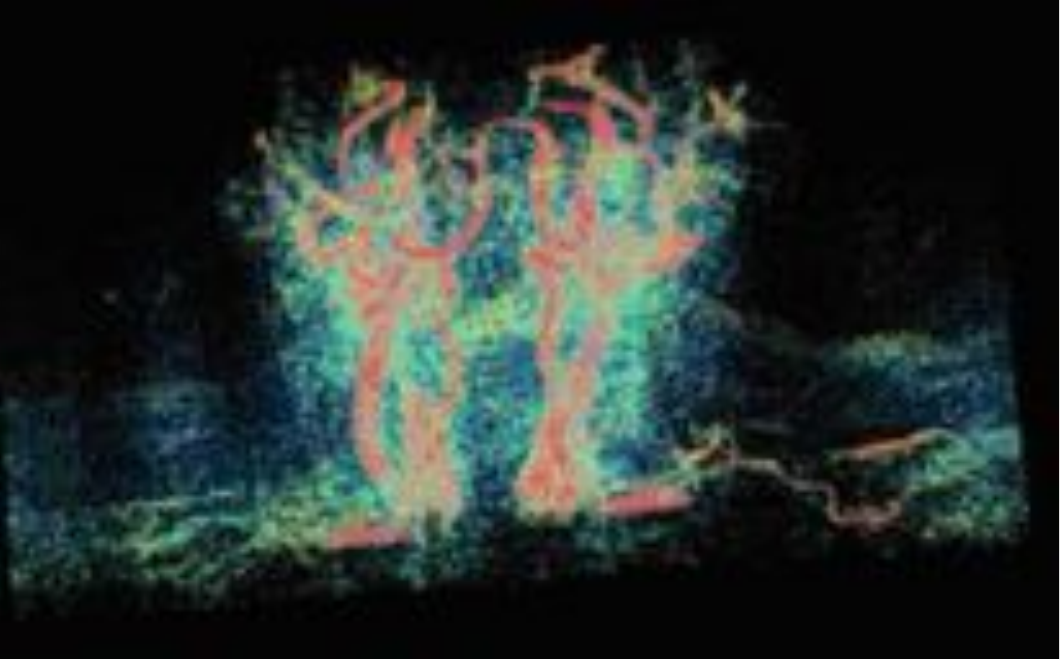} \\
(a)
\end{tabular}
\begin{tabular}{cc}
\includegraphics[width=70mm, height=70mm]{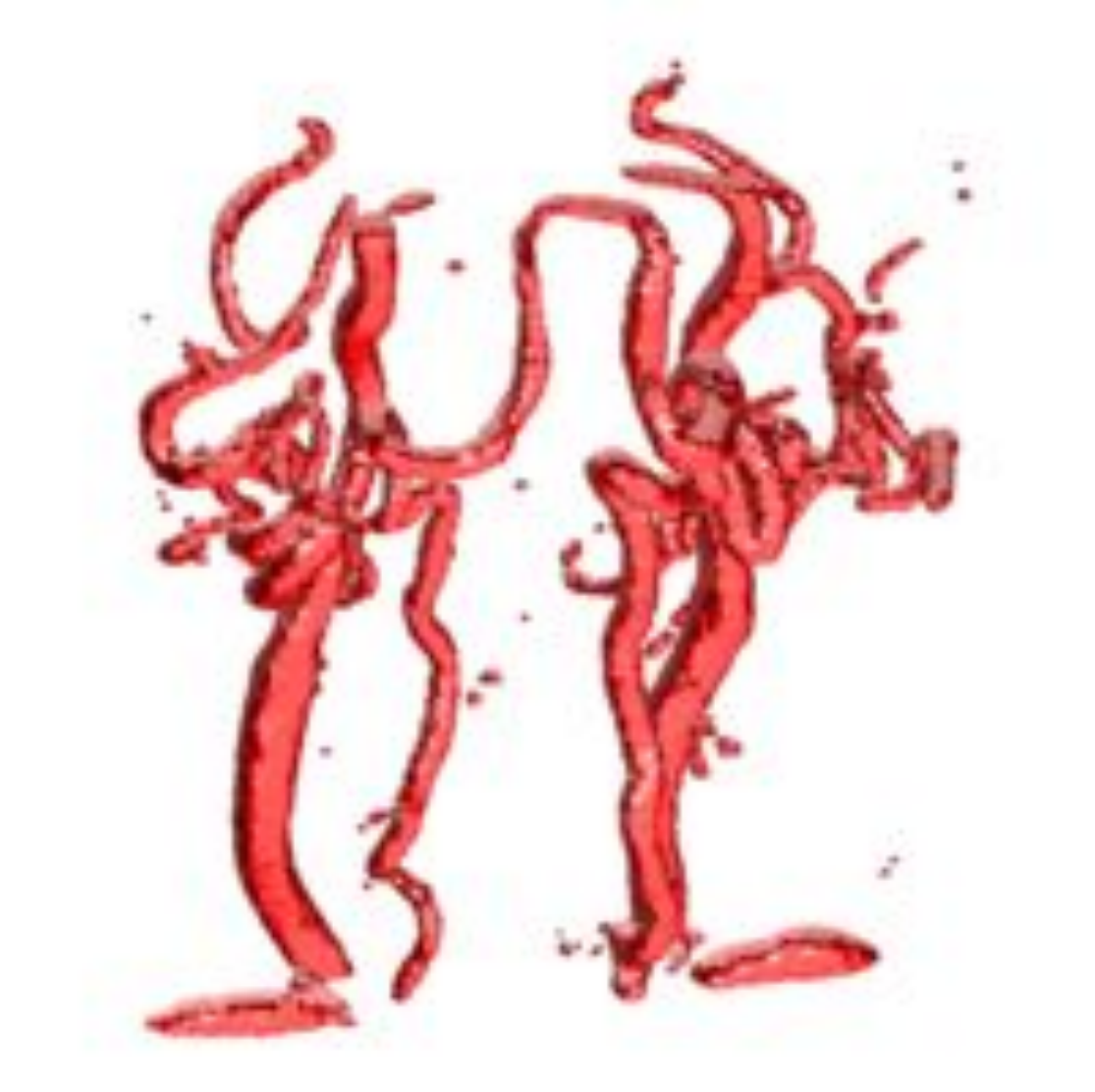} &
\includegraphics[width=74mm, height=70mm]{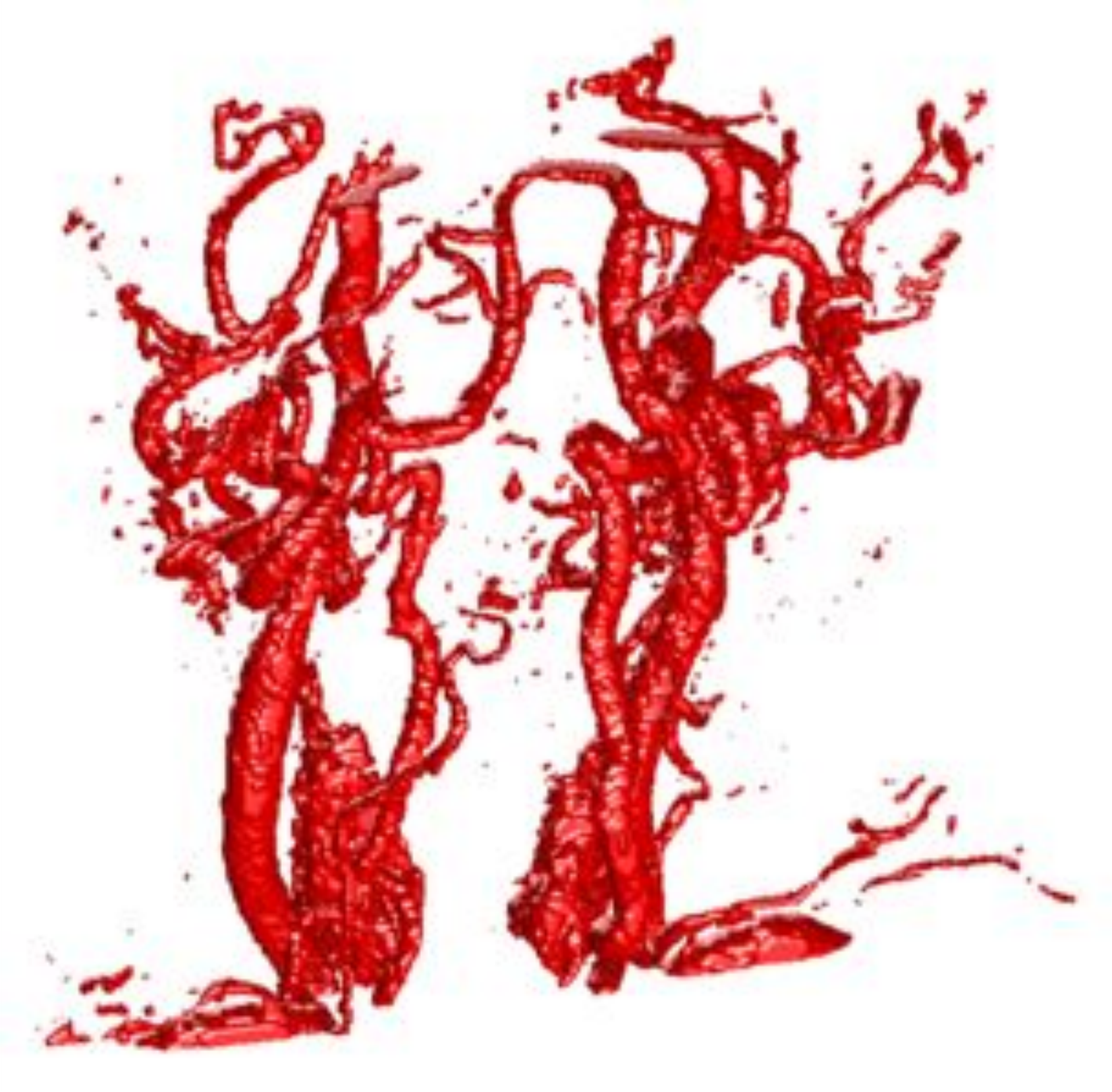} \\
(b) & (c)
\end{tabular}
\begin{tabular}{c}
\includegraphics[width=74mm, height=70mm]{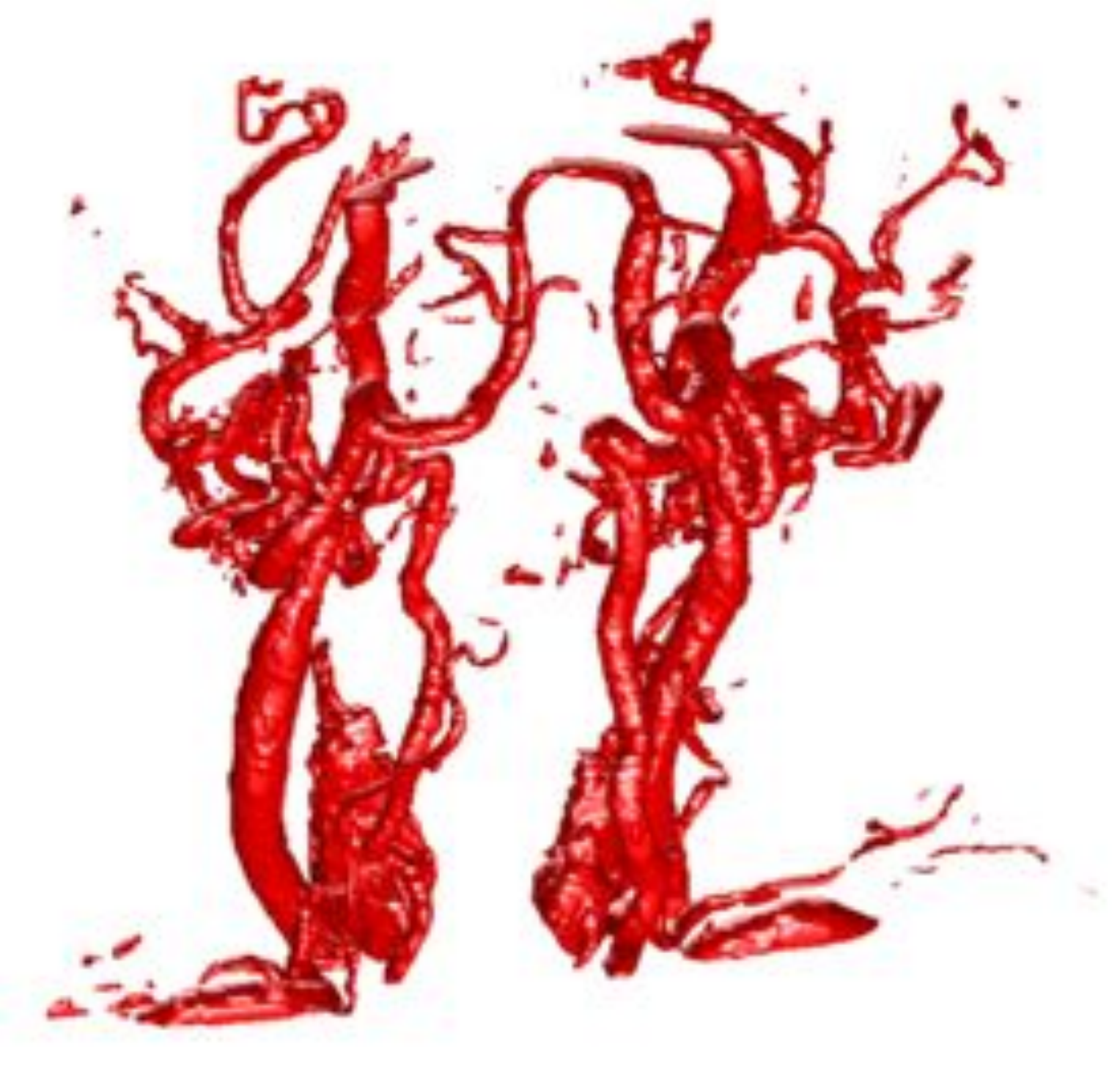} \\
(d)
\end{tabular}
\end{center}
\caption{Segmentation of the brain volume data set.
(a) Given MRA image. (b) Result by the method in \cite{ESF01}.
(c) Result by our method. (d) Result of our method after smoothing
by (\ref{q2}) once.
} \label{fig_brain3d}
\end{figure}

\section{Conclusions and Future Work}\label{sec6}

In this paper, we introduced a new and
efficient segmentation method based on the tight-frame approach.
The numerical results demonstrate
the ability of our method in segmenting tubular structures.
The method can be implemented fast and give very accurate,
smooth boundaries or surfaces. In addition, since the pixel values
of more and more pixels will be set to either 0 or 1 during the iterations,
by taking advantage of this, one can construct a sparse
data structure to accelerate the method. Moreover, one
can use different tight-frame systems such as those from
contourlets, curvelets or steerable-wavelet \cite{DV,CDDL,FE}
to capture more directions along the boundary.
Though we have proved that our algorithm will always
converge to a binary image, it will be interesting to
see what functional the binary image is minimizing.
The framework for proving convergence for tight-frame
algorithms, as developed in \cite{JRZ}, may be useful here.
These are the directions we will explore in the future.

\subsubsection*{Acknowledgments.}
This work was supported by HKRGC Grant CUHK 400510 and CUHK Direct
Allocation Grant 2060408.


\end{document}